\def\ps@pprintTitle{%
  \let\@oddhead\@empty
  \let\@evenhead\@empty
  \def\@oddfoot{\reset@font\hfil\thepage\hfil}
  \let\@evenfoot\@oddfoot
}
\newtheorem{lemma}{Lemma}
\newtheorem{theorem}{Theorem}
\numberwithin{equation}{section}
\begin{document}
\begin{frontmatter}
\title{On $(p,q)$ Baskakov-Durrmeyer-Stancu Operators}
\author[label1,label2,label*]{Vishnu Narayan Mishra}
\ead{vishnunarayanmishra@gmail.com,
vishnu\_narayanmishra@yahoo.co.in}
\address[label1]{Department of Applied Mathematics \& Humanities,
Sardar Vallabhbhai National Institute of Technology, Ichchhanath Mahadev Dumas Road, Surat -395 007 (Gujarat), India}
\address[label2]{L. 1627 Awadh Puri Colony, Phase –III, Beniganj , Opp. Industrial Training Institute(I.T.I.),
 Ayodhya Main Road, Faizabad-224 001, (Uttar Pradesh), India}
\fntext[label*]{Corresponding author}
\author[label1]{Shikha Pandey}
\ead{sp1486@gmail.com}
\begin{abstract}
In the present paper, we introduce the generalized form of $(p,q)$ Baskakov-Durrmeyer Operators with Stancu type parameters. We derived the local and global approximation properties of these operators and obtained the convergence rate and behaviour for the Lipschitz functions. Moreover, we give comparisons and some illustrative graphics for the convergence of operators to some function.
\end{abstract}
\begin{keyword}
$(p,q)$-integers; $(p,q)$-Baskakov-Durrmeyer operators; Linear positive operator.\\
\textit{$2000$ Mathematics Subject Classification:} Primary $41A10$, $41A25$, $41A30$,$41A36$.
\end{keyword}
\end{frontmatter}
\section{Introduction}
In Post-quantum calculus, approximation by linear positive operators is an important research topic. $(p,q)$-calculus is used efficiently in various arena of science, for instance Lie group, differential equations, field theory, hypergeometric series etc (See \cite{Burban1, Burban2}). Various approximation operators has been defined which are based on $(p,q)$-integers. Some of these are Bernstein operators \cite{bernpq}, Bleimann-Butzer-Hahn operators \cite{bbhpq}, Sz\'{a}sz-Mirakyan operators \cite{szaszmirakyanpq} etc. (See \cite{V2,V3}).\\
For more detail and basic notations of $(p,q)$-calculus please refer \cite{baskaDurr,basicpq1,basicpq2,basicpq3}. Baskakov operators, its Durrmeyer type modification and its Stancu type genralizations has been well studied by various mathematicians (See \cite{G1,rnm1,rnm2,V1,W1}).
In order to approximate Lebesgue integrable functions Acar et al. defined $(p,q)$-Baskakov-Durrmeyer Operators \cite{baskaDurr} as
\begin{equation}\label{operator1}
\mathcal{B}_n^{p,q}(f;x)=[n]_{p,q}\sum_{k=0}^\infty b_{n,k}^{p,q}(x) \int_0^\infty p^{k(k-1)/2} \frac{([n]_{p,q})^k E_{p,q}(-q[n]_{p,q}t)}{[k]_{p,q}!} f\left(\frac{p^{k+n-1}}{q^{k-1}}t\right) d_{p,q}t,
\end{equation}
where $x\in [0,\infty), 0<q<p\leq 1$ and
\begin{equation*}
b_{n,k}^{p,q}(x)=\left[\begin{array}{c} n+k-1 \\ k \end{array} \right]_{p,q}p^{k+n(n-1)/2}q^{k(k-1)/2}\frac{x^k}{(1\oplus x)^{n+k}_{p,q}}, \quad E_{p,q}(x)=\sum_{n=0}^\infty \frac{q^{n(n-1)/2}x^n}{[n]_{p,q}!}
\end{equation*}
In this paper we generalize this operator \ref{operator1} with Stancu type parameters. Assuming $0\leq \alpha \leq \beta$, we define
\begin{equation}\label{operator2}
\mathcal{B}_{n,\alpha,\beta}^{p,q}(f;x)=[n]_{p,q}\sum_{k=0}^\infty b_{n,k}^{p,q}(x) \int_0^\infty p^{k(k-1)/2} \frac{([n]_{p,q})^k E_{p,q}(-q[n]_{p,q}t)}{[k]_{p,q}!} f\left(\frac{p^{k+n-1}q^{1-k}t[n]_{p,q}+\alpha}{[n]_{p,q}+\beta}\right) d_{p,q}t,
\end{equation}
for $x\in [0,\infty), 0<q<p\leq 1$.
\section{Moments}
\begin{lemma}\label{lemma1} For $x\in [0,\infty), 0<q<p\leq 1$, we have
\begin{enumerate}
\item[(i).] $\mathcal{B}_n^{p,q}(1;x)=1$,
\item[(ii).] $\mathcal{B}_n^{p,q}(t;x)=x+\frac{qp^{n-1}}{[n]_{p,q}}$
\item[(iii).]$\mathcal{B}_n^{p,q}(t^2;x)=\left(p^2+\frac{p^{n+2}}{q[n]_{p,q}}\right) x^2+\frac{p^{n-1}[2]_{p,q}^2}{[n]_{p,q}}x+\frac{[2]_{p,q}p^{2n-1}q^2}{[n]_{p,q}^2}.$
\end{enumerate}
\end{lemma}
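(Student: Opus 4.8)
The plan is to treat $m=0,1,2$ uniformly: first collapse each $\mathcal{B}_n^{p,q}(t^m;x)$ into one series in $k$ whose coefficients are moments of the integral kernel, and then resum that series against the $(p,q)$-Baskakov weights $b_{n,k}^{p,q}(x)$. Substituting $f(t)=t^m$ and pulling the constant $\bigl(p^{k+n-1}q^{1-k}\bigr)^m$ out of the integral gives
\begin{equation*}
\mathcal{B}_n^{p,q}(t^m;x)=[n]_{p,q}\sum_{k=0}^\infty b_{n,k}^{p,q}(x)\,\bigl(p^{k+n-1}q^{1-k}\bigr)^{m}\,\frac{[n]_{p,q}^{k}\,p^{k(k-1)/2}}{[k]_{p,q}!}\,\int_0^\infty t^{k+m}\,E_{p,q}(-q[n]_{p,q}t)\,d_{p,q}t,
\end{equation*}
where the power $t^{k+m}$ combines the $t^k$ carried by the Durrmeyer kernel with the $t^m$ coming from $f$. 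Thus everything reduces to the single family of improper $(p,q)$-integrals $\int_0^\infty t^{s}E_{p,q}(-q[n]_{p,q}t)\,d_{p,q}t$.

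First I would evaluate this $(p,q)$-Gamma-type moment. Expanding $E_{p,q}$ by its defining series and integrating $t^{s}$ termwise with the improper $(p,q)$-integration rule (or quoting the corresponding formula from \cite{baskaDurr}) yields a closed form of the shape $\dfrac{[s]_{p,q}!}{[n]_{p,q}^{\,s+1}}$ times an explicit power of $p$ and $q$. Setting $s=k+m$ and simplifying the $[k]_{p,q}!$, the power of $[n]_{p,q}$, and the accumulated $p,q$-powers turns the $k$-th coefficient into $\dfrac{[k+m]_{p,q}!}{[k]_{p,q}!\,[n]_{p,q}^{m}}$ times a monomial in $p,q$ (and $p^k,q^k$). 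For $m=0$ this coefficient is identically $1$, so (i) follows at once from the normalization $\sum_{k}b_{n,k}^{p,q}(x)=1$; for $m=1$ it is a multiple of $[k+1]_{p,q}/[n]_{p,q}$, and for $m=2$ a multiple of $[k+1]_{p,q}[k+2]_{p,q}/[n]_{p,q}^2$.

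The remaining step is to resum these against $b_{n,k}^{p,q}(x)$. I would break the factorial ratios with the $(p,q)$-integer recurrence $[k+1]_{p,q}=p^{k}+q[k]_{p,q}$ (iterated for $[k+2]_{p,q}$), so that each moment becomes a combination of the basic sums $\sum_k b_{n,k}^{p,q}(x)$, $\sum_k [k]_{p,q}\,\omega^{k}b_{n,k}^{p,q}(x)$ and $\sum_k [k]_{p,q}[k-1]_{p,q}\,\omega^{k}b_{n,k}^{p,q}(x)$ for suitable weights $\omega$. These are exactly the zeroth, first and second factorial moments of the $(p,q)$-Baskakov basis, which I would obtain from the $(p,q)$-binomial identity $\sum_k\left[\begin{array}{c}n+k-1\\k\end{array}\right]_{p,q}(\cdots)\,x^k=(1\oplus x)_{p,q}^{-n}$ by an index shift $k\mapsto k+1$ (and $k\mapsto k+2$). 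Assembling the pieces and collapsing the $p,q$-powers with identities such as $[n+1]_{p,q}=p[n]_{p,q}+q^{n}$ should produce the stated closed forms for (ii) and (iii).

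The main obstacle is not any single idea but the relentless bookkeeping of the powers of $p$ and $q$: they enter through $p^{k(k-1)/2}$ and $q^{k(k-1)/2}$ in the kernel and basis, through the argument rescaling $p^{k+n-1}q^{1-k}$, and again through the $(p,q)$-Gamma integral, and they must cancel precisely so that the $x^2$-coefficient in (iii) comes out as $p^2+p^{n+2}/(q[n]_{p,q})$ rather than some unsimplified variant. I would therefore carry the $p,q$-powers symbolically through each stage and only simplify at the very end using the $(p,q)$-factorial and integer recurrences.
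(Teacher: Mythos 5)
The first thing to note is that the paper never proves Lemma \ref{lemma1}: these are the moments of the operators $\mathcal{B}_n^{p,q}$ of Acar, Aral and Mursaleen, and the paper simply imports them from \cite{baskaDurr}, using them only as input to the proof of Lemma \ref{lemma2}. So there is no internal proof to compare yours against; the relevant benchmark is the computation in the cited source, and your outline reproduces essentially that route: pull the constant $\bigl(p^{k+n-1}q^{1-k}\bigr)^m$ out of the integral, evaluate the $(p,q)$-Gamma-type moments $\int_0^\infty t^{k+m}E_{p,q}(-q[n]_{p,q}t)\,d_{p,q}t$ in closed form, and resum the surviving factors $[k+1]_{p,q}$ and $[k+1]_{p,q}[k+2]_{p,q}$ against the basis $b_{n,k}^{p,q}(x)$ via $[k+1]_{p,q}=p^{k}+q[k]_{p,q}$ and the normalization $\sum_k b_{n,k}^{p,q}(x)=1$. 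That structure is sound and, with the bookkeeping actually carried out, does yield (i)--(iii).

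One step would fail as literally stated, however. You cannot obtain the Gamma-type moment by ``expanding $E_{p,q}$ by its defining series and integrating $t^{s}$ termwise'': the improper $(p,q)$-integral of a monomial, $\int_0^\infty t^{j}\,d_{p,q}t$, diverges for every $j$, because the bilateral sum defining the improper $(p,q)$-integral has terms growing like $(q/p)^{k(j+1)}$ as $k\to-\infty$ with $0<q/p<1$. So termwise integration produces only infinities; the decay that makes the moment finite comes from the factor $E_{p,q}(-q[n]_{p,q}t)$ as a whole and is invisible term by term. The closed form $\int_0^\infty t^{s}E_{p,q}(-qt)\,d_{p,q}t=c_{s}(p,q)\,[s]_{p,q}!$, with $c_{s}(p,q)$ an explicit power of $p$ and $q$, has to be established by iterated $(p,q)$-integration by parts (using the $D_{p,q}$-relations for $E_{p,q}$), or simply quoted --- which your parenthetical fallback to \cite{baskaDurr} does allow, and which is in effect what the paper itself does for the entire lemma. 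With that repair the proposal is fine; the remaining risk is the one you acknowledge yourself, namely that the $p,q$-power bookkeeping must cancel exactly to produce coefficients such as $p^{2}+p^{n+2}/(q[n]_{p,q})$, and that part of your argument is asserted rather than verified.
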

\begin{lemma}\label{lemma2}For $x\in [0,\infty), 0<q<p\leq 1, 0\leq \alpha \leq \beta$, we have
\begin{enumerate}
\item[(i).] $\mathcal{B}_{n,\alpha,\beta}^{p,q}(1;x)=1$,
\item[(ii).] $\mathcal{B}_{n,\alpha,\beta}^{p,q}(t;x)=\frac{[n]_{p,q}}{([n]_{p,q}+\beta)}x+\frac{qp^{n-1}+\alpha}{[n]_{p,q}+\beta}$
\item[(iii).]$\mathcal{B}_{n,\alpha,\beta}^{p,q}(t^2;x)=\left(\frac{[n]_{p,q}^2 qp^2+[n]_{p,q} p^{n+2}}{q([n]_{p,q}+\beta)^2 }\right)x^2+\left(\frac{[n]_{p,q}[2]_{p,q}^2p^{n-1}+2\alpha [n]_{p,q}}{([n]_{p,q}+\beta)^2}\right)x+\frac{[2]_{p,q}q^2p^{2n-1}+2\alpha qp^{n-1}+\alpha^2}{([n]_{p,q}+\beta)^2}.$
\end{enumerate}
\end{lemma}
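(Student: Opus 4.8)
The plan is to reduce everything to Lemma \ref{lemma1} by observing that the Stancu operator in \eqref{operator2} is simply the basic operator in \eqref{operator1} applied to a composed test function. Indeed, since $\frac{p^{k+n-1}}{q^{k-1}}=p^{k+n-1}q^{1-k}$, the argument appearing inside $f$ in \eqref{operator2} is exactly $\dfrac{[n]_{p,q}\,u+\alpha}{[n]_{p,q}+\beta}$ with $u=\dfrac{p^{k+n-1}}{q^{k-1}}t$ being the argument appearing in \eqref{operator1}. Hence, writing $\psi(u)=\dfrac{[n]_{p,q}\,u+\alpha}{[n]_{p,q}+\beta}$, one has the identity
$$\mathcal{B}_{n,\alpha,\beta}^{p,q}(f;x)=\mathcal{B}_n^{p,q}\bigl(f\circ\psi;x\bigr),$$
which I would establish first; this is the only conceptual step, and it turns the whole lemma into an application of the linearity of $\mathcal{B}_n^{p,q}$.

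For part (i), taking $f\equiv 1$ gives $f\circ\psi\equiv 1$, and the claim is immediate from Lemma \ref{lemma1}(i). For part (ii), taking $f(t)=t$ yields the affine map $(f\circ\psi)(u)=\frac{[n]_{p,q}}{[n]_{p,q}+\beta}u+\frac{\alpha}{[n]_{p,q}+\beta}$, so by linearity
$$\mathcal{B}_{n,\alpha,\beta}^{p,q}(t;x)=\frac{[n]_{p,q}}{[n]_{p,q}+\beta}\,\mathcal{B}_n^{p,q}(t;x)+\frac{\alpha}{[n]_{p,q}+\beta}\,\mathcal{B}_n^{p,q}(1;x),$$
and substituting the values of $\mathcal{B}_n^{p,q}(t;x)$ and $\mathcal{B}_n^{p,q}(1;x)$ from Lemma \ref{lemma1}, then combining the two constant contributions over the common denominator $[n]_{p,q}+\beta$, gives the stated formula.

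For part (iii), taking $f(t)=t^2$ gives $(f\circ\psi)(u)=\dfrac{[n]_{p,q}^2u^2+2\alpha[n]_{p,q}u+\alpha^2}{([n]_{p,q}+\beta)^2}$, so linearity yields
$$\mathcal{B}_{n,\alpha,\beta}^{p,q}(t^2;x)=\frac{[n]_{p,q}^2\,\mathcal{B}_n^{p,q}(t^2;x)+2\alpha[n]_{p,q}\,\mathcal{B}_n^{p,q}(t;x)+\alpha^2}{([n]_{p,q}+\beta)^2}.$$
I would then substitute the three moments from Lemma \ref{lemma1} and collect terms by powers of $x$: the $x^2$-coefficient comes solely from $[n]_{p,q}^2\,\mathcal{B}_n^{p,q}(t^2;x)$, the $x$-coefficient gathers the $x$-term of $[n]_{p,q}^2\,\mathcal{B}_n^{p,q}(t^2;x)$ together with the $x$-term of $2\alpha[n]_{p,q}\,\mathcal{B}_n^{p,q}(t;x)$, and the constant term collects the three remaining pieces.

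The proof involves no genuine obstacle; the only point requiring care is the very first identity $\frac{p^{k+n-1}}{q^{k-1}}=p^{k+n-1}q^{1-k}$, which guarantees that the affine substitution is exact and thereby reduces everything to Lemma \ref{lemma1}. After that, the remaining effort is purely the algebraic bookkeeping in (iii), where one must in particular keep the factor $q$ in the $x^2$-coefficient arising from clearing the $1/q$ that appears in $\mathcal{B}_n^{p,q}(t^2;x)$.
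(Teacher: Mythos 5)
Your proposal is correct and follows essentially the same route as the paper: the paper likewise expands the Stancu argument $\frac{p^{k+n-1}q^{1-k}t[n]_{p,q}+\alpha}{[n]_{p,q}+\beta}$ inside the integral and uses linearity to write $\mathcal{B}_{n,\alpha,\beta}^{p,q}$ of $1$, $t$, $t^2$ in terms of the moments $\mathcal{B}_n^{p,q}(1;x)$, $\mathcal{B}_n^{p,q}(t;x)$, $\mathcal{B}_n^{p,q}(t^2;x)$ from Lemma \ref{lemma1}, exactly as you do via the substitution $\psi$. Your explicit formulation of the identity $\mathcal{B}_{n,\alpha,\beta}^{p,q}(f;x)=\mathcal{B}_n^{p,q}(f\circ\psi;x)$ is merely a cleaner packaging of the same computation.
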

\begin{proof}
Using Lemma \ref{lemma1}, we can easily say, $\mathcal{B}_{n,\alpha,\beta}^{p,q}(1;x)=1$. Moreover
\begin{eqnarray*}
&&\mathcal{B}_{n,\alpha,\beta}^{p,q}(t;x)=[n]_{p,q}\sum_{k=0}^\infty b_{n,k}^{p,q}(x) \int_0^\infty p^{k(k-1)/2} \frac{([n]_{p,q})^k E_{p,q}(-q[n]_{p,q}t)}{[k]_{p,q}!} \left(\frac{p^{k+n-1}q^{1-k}t[n]_{p,q}+\alpha}{[n]_{p,q}+\beta}\right) d_{p,q}t\\&&
=\frac{[n]_{p,q}}{([n]_{p,q}+\beta)}\mathcal{B}_{n}^{p,q}(t;x)+\frac{\alpha}{[n]_{p,q}+\beta}\mathcal{B}_{n}^{p,q}(1;x)\\&&
=\frac{[n]_{p,q}}{([n]_{p,q}+\beta)}x+\frac{qp^{n-1}+\alpha}{[n]_{p,q}+\beta}.
\end{eqnarray*}and
\begin{eqnarray*}
&&\mathcal{B}_{n,\alpha,\beta}^{p,q}(t^2;x)=[n]_{p,q}\sum_{k=0}^\infty b_{n,k}^{p,q}(x) \int_0^\infty p^{k(k-1)/2} \frac{([n]_{p,q})^k E_{p,q}(-q[n]_{p,q}t)}{[k]_{p,q}!} \left(\frac{p^{k+n-1}q^{1-k}t[n]_{p,q}+\alpha}{[n]_{p,q}+\beta}\right)^2 d_{p,q}t\\&&
=\left(\frac{[n]_{p,q}}{([n]_{p,q}+\beta)}\right)^2\mathcal{B}_{n}^{p,q}(t^2;x)+\frac{2\alpha [n]_{p,q}}{([n]_{p,q}+\beta)^2}\mathcal{B}_{n}^{p,q}(t;x)+\frac{\alpha^2}{([n]_{p,q}+\beta)^2}\mathcal{B}_{n}^{p,q}(t;x)\\
&&=\left(\frac{[n]_{p,q}}{[n]_{p,q}+\beta}\right)^2\Biggl[\left(p^2+\frac{p^{n+2}}{q[n]_{p,q}}\right) x^2+\frac{p^{n-1}[2]_{p,q}^2}{[n]_{p,q}}x+\frac{[2]_{p,q}p^{2n-1}q^2}{[n]_{p,q}^2} \Biggl]+\frac{2\alpha [n]_{p,q}}{([n]_{p,q}+\beta)^2}\Biggl[ x+\frac{qp^{n-1}}{[n]_{p,q}}\Biggl]+\frac{\alpha^2}{([n]_{p,q}+\beta)^2}\\&&
=\left(\frac{[n]_{p,q}^2 qp^2+[n]_{p,q} p^{n+2}}{q([n]_{p,q}+\beta)^2 }\right)x^2+\Biggl(\frac{[n]_{p,q}[2]_{p,q}^2p^{n-1}+2\alpha [n]_{p,q}}{([n]_{p,q}+\beta)^2}\Biggl)x+\frac{[2]_{p,q}q^2p^{2n-1}+2\alpha qp^{n-1}+\alpha^2}{([n]_{p,q}+\beta)^2}.
\end{eqnarray*}
\end{proof}
Using Lemma\ref{lemma2} we can obtain
\begin{eqnarray*}
\mathcal{B}_{n,\alpha,\beta}^{p,q}((t-x);x)=\left(\frac{[n]_{p,q}}{([n]_{p,q}+\beta)}-1\right)x+\frac{qp^{n-1}+\alpha}{[n]_{p,q}+\beta},
\end{eqnarray*}
and
\begin{equation*}
\mathcal{B}_{n,\alpha,\beta}^{p,q}((t-x)^2;x)=\gamma_1(n)x^2+\gamma_2(n)x+\gamma_3(n),
\end{equation*}
where
\begin{equation*}
\gamma_1(n)=\left(\frac{[n]_{p,q}^2 qp^2+[n]_{p,q} p^{n+2}}{q([n]_{p,q}+\beta)^2 }\right)-\frac{2[n]_{p,q}}{([n]_{p,q}+\beta)}+1,
\end{equation*}
\begin{equation*}
\gamma_2(n)=\Biggl(\frac{[n]_{p,q}[2]_{p,q}^2p^{n-1}+2\alpha [n]_{p,q}}{([n]_{p,q}+\beta)^2}\Biggl)-\frac{2qp^{n-1}+\alpha}{[n]_{p,q}+\beta},
\end{equation*}
\begin{equation*}
\gamma_3(n)=\frac{[2]_{p,q}q^2p^{2n-1}+2\alpha qp^{n-1}+\alpha^2}{([n]_{p,q}+\beta)^2}.
\end{equation*}
Assuming $\gamma^*(n)=max\{\gamma_1(n),\frac{\gamma_2(n)}{2},\gamma_3(n)\}$, then
\begin{equation*}
\mathcal{B}_{n,\alpha,\beta}^{p,q}((t-x)^2;x)\leq \gamma^*(n)(1+x)^2.
\end{equation*}
\section{Local Approximation}
Let us consider the space of all bounded, real valued continuous function on $[0,\infty)$. The norm in this space is defined as
$$\|f\|_{C_B}=\sup_{x\in[0,\infty)}|f(x)|.$$
Consider the $\mathcal{K}$-functional
$$\mathcal{K}_2(f,\delta)=\inf_{g\in W^2}{\|f-g\|_{C_B}+\delta\|g^{''}\|_{C_B}},$$
where $\delta>0$ and $W^2=\{ g\in C_B[0,\infty):g^{'},g^{''}\in C_B[0,\infty)\}$.
 From \cite{devore}, (p. 177, Theorem 2.4), there exists an absolute constant $C>0$ such that
 \begin{equation}\label{def1}
 {K}_2(f,\delta)\leq \omega_2(f,\sqrt{\delta}),
 \end{equation}
where the second order modulus of smoothness of $f\in C_B[0,\infty)$ is $$\omega_2(f,\delta)=\sup_{0< h\leq \delta}\sup_{x\in [0,\infty)}|f(x+2h)-2f(x+h)+f(x)|,$$ and the usual modulus of continuity is $$\omega(f,\delta)=\sup_{0< h\leq \delta}\sup_{x\in [0,\infty)}|f(x+h)-f(x)|.$$
\begin{lemma}\label{lemma3}
For $f\in C_B[0,\infty), g\in C_B^2[0,\infty)$, we have
\begin{equation}
|\mathcal{\hat{B}}_{n,\alpha,\beta}^{p,q}(g;x)-g(x)|\leq \|g^{''}\|_{C_B}(\gamma^*(n)(1+x)^2+\mu_n^2(p,q,x)),
\end{equation}where
auxiliary operator \begin{equation} \label{operator3}
{\hat{B}}_{n,\alpha,\beta}^{p,q}(g;x)=\mathcal{B}_{n,\alpha,\beta}^{p,q}(g;x)+g(x)-g(\mathcal{B}_{n,\alpha,\beta}^{p,q}(t;x)).
\end{equation}
\end{lemma}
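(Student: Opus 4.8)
The plan is to use the one property for which the auxiliary operator \eqref{operator3} was designed: it reproduces affine functions. First I would record the two identities $\mathcal{\hat{B}}_{n,\alpha,\beta}^{p,q}(1;x)=1$ and $\mathcal{\hat{B}}_{n,\alpha,\beta}^{p,q}(t;x)=x$. The former is immediate from Lemma~\ref{lemma2}(i), since $\mathcal{\hat{B}}_{n,\alpha,\beta}^{p,q}(1;x)=\mathcal{B}_{n,\alpha,\beta}^{p,q}(1;x)+1-1=1$, and the latter follows directly from \eqref{operator3}, because $\mathcal{\hat{B}}_{n,\alpha,\beta}^{p,q}(t;x)=\mathcal{B}_{n,\alpha,\beta}^{p,q}(t;x)+x-\mathcal{B}_{n,\alpha,\beta}^{p,q}(t;x)=x$. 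Together these give $\mathcal{\hat{B}}_{n,\alpha,\beta}^{p,q}((t-x);x)=0$, i.e. the auxiliary operator kills the first central moment; this is precisely the effect of subtracting $g(\mathcal{B}_{n,\alpha,\beta}^{p,q}(t;x))$ in the construction.

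Next I would expand $g$ by Taylor's formula with integral remainder, valid for $g\in C_B^2[0,\infty)$,
\[
g(t)=g(x)+(t-x)g'(x)+\int_x^t (t-u)\,g''(u)\,du .
\]
Applying $\mathcal{\hat{B}}_{n,\alpha,\beta}^{p,q}(\cdot;x)$ and using the two reproduction identities, the constant term gives back $g(x)$ and the linear term is annihilated, so that
\[
\mathcal{\hat{B}}_{n,\alpha,\beta}^{p,q}(g;x)-g(x)=\mathcal{\hat{B}}_{n,\alpha,\beta}^{p,q}\!\left(\int_x^t (t-u)g''(u)\,du;\,x\right).
\]
Writing $R(t):=\int_x^t (t-u)g''(u)\,du$ and noting $R(x)=0$, the term $R(x)$ disappears when \eqref{operator3} is unfolded, leaving
\[
\mathcal{\hat{B}}_{n,\alpha,\beta}^{p,q}(g;x)-g(x)=\mathcal{B}_{n,\alpha,\beta}^{p,q}\big(R;x\big)-\int_x^{\mathcal{B}_{n,\alpha,\beta}^{p,q}(t;x)}\!\big(\mathcal{B}_{n,\alpha,\beta}^{p,q}(t;x)-u\big)g''(u)\,du .
\]

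To conclude I would estimate the two remainder pieces by $\|g''\|_{C_B}$. The elementary bound $|R(t)|\le \tfrac12\|g''\|_{C_B}(t-x)^2$ gives, upon applying the positive operator $\mathcal{B}_{n,\alpha,\beta}^{p,q}$ and invoking $\mathcal{B}_{n,\alpha,\beta}^{p,q}((t-x)^2;x)\le \gamma^*(n)(1+x)^2$ from the remark after Lemma~\ref{lemma2}, a bound $\tfrac12\|g''\|_{C_B}\gamma^*(n)(1+x)^2$ for the first term; the same inequality applied to the second integral yields $\tfrac12\|g''\|_{C_B}\big(\mathcal{B}_{n,\alpha,\beta}^{p,q}(t;x)-x\big)^2=\tfrac12\|g''\|_{C_B}\,\mu_n^2(p,q,x)$, where $\mu_n(p,q,x):=\mathcal{B}_{n,\alpha,\beta}^{p,q}((t-x);x)$ is the first central moment. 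Adding the two pieces gives $\tfrac12\|g''\|_{C_B}\big(\gamma^*(n)(1+x)^2+\mu_n^2(p,q,x)\big)$, which is sharper than, hence implies, the stated inequality.

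I expect the only delicate point to be the algebraic bookkeeping: verifying the reproduction identity $\mathcal{\hat{B}}_{n,\alpha,\beta}^{p,q}((t-x);x)=0$ and correctly unfolding \eqref{operator3} on the remainder $R$, so that the out-of-operator integral appears with the argument $\mathcal{B}_{n,\alpha,\beta}^{p,q}(t;x)$. Once these are in place, the estimate is a routine consequence of positivity together with the moment bounds already derived.
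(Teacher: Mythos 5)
Your proposal is correct and follows essentially the same route as the paper: the reproduction identity $\mathcal{\hat{B}}_{n,\alpha,\beta}^{p,q}((t-x);x)=0$, Taylor expansion with integral remainder, the same decomposition into $\mathcal{B}_{n,\alpha,\beta}^{p,q}(R;x)$ minus the integral with endpoint $\mathcal{B}_{n,\alpha,\beta}^{p,q}(t;x)$, and the same moment bounds. The only difference is that you keep the exact factor $\tfrac12$ from $\bigl|\int_x^t|t-u|\,du\bigr|=\tfrac12(t-x)^2$, giving a slightly sharper constant than the paper, which discards it; this does not change the argument.
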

\begin{proof}By the definition of auxiliary operator, it can be concluded that
$\mathcal{\hat{B}}_{n,\alpha,\beta}^{p,q}(t-x;x)=0$.
Expanding the function $g$ using Taylor's expansion
$$g(t)=g(x)+g(x)(t-x)+\int_x^t(t-u)g^{''}(u)du.$$
Operating with \ref{operator3} on both sides of above equation, we get
\begin{eqnarray}\label{eq1}
&&|\mathcal{\hat{B}}_{n,\alpha,\beta}^{p,q}(g;x)-g(x)|=\mathcal{\hat{B}}_{n,\alpha,\beta}^{p,q}\left(\int_x^t(t-u)g^{''}(u)du;x\right)=\mathcal{B}_{n,\alpha,\beta}^{p,q}\left(\int_x^t(t-u)g^{''}(u)du;x\right) \nonumber \\&&\hspace{4 cm}-\left(\int^{\mathcal{B}_{n,\alpha,\beta}^{p,q}(t;x)}_x(\mathcal{B}_{n,\alpha,\beta}^{p,q}(t;x)-u)g^{''}(u)du\right)\nonumber \\
&&\hspace{2 cm}=\mathcal{B}_{n,\alpha,\beta}^{p,q}\left(\int_x^t(t-u)g^{''}(u)du;x\right)-\int^{\mathcal{B}_{n,\alpha,\beta}^{p,q}(t;x)}_x\left(\frac{[n]_{p,q}}{([n]_{p,q}+\beta)}x+\frac{qp^{n-1}+\alpha}{[n]_{p,q}+\beta}-u\right) g^{''}(u)du.\nonumber\\
\end{eqnarray}
On further investigation
\begin{equation}\label{eq2}
\left|\int_x^t(t-u)g^{''}(u)du\right|\leq \left|\int_x^t|t-u|~|g^{''}(u)|du\right|\leq \|g^{''}\|_{C_B}\left|\int_x^t|t-u|du\right|\leq \|g^{''}\|_{C_B} (t-x)^2,
\end{equation}and
\begin{eqnarray}\label{eq3}
\left|\int^{\mathcal{B}_{n,\alpha,\beta}^{p,q}(t;x)}_x\left(\frac{[n]_{p,q}}{([n]_{p,q}+\beta)}x+\frac{qp^{n-1}+\alpha}{[n]_{p,q}+\beta}-u\right) g^{''}(u)du\right|&\leq& \left(\frac{[n]_{p,q}}{([n]_{p,q}+\beta)}x+\frac{qp^{n-1}+\alpha}{[n]_{p,q}+\beta}-x\right)^2\|g^{''}\|_{C_B}\nonumber \\&=&\mu_n^2(p,q,x)\|g^{''}\|_{C_B}.
\end{eqnarray}
Substituting results of \ref{eq2} and \ref{eq3} into \ref{eq1} we get
$$|\mathcal{\hat{B}}_{n,\alpha,\beta}^{p,q}(g;x)-g(x)|\leq \|g^{''}\|_{C_B}(\gamma^*(n)(1+x)^2+\mu_n^2(p,q,x)).$$
\end{proof}
\begin{theorem}
For $f\in C_B[0,\infty)$ and $x\in [0,\infty)$, there exists a constant $L>0$ such that
$$|\mathcal{B}_{n,\alpha,\beta}^{p,q}(f;x)-f(x)|\leq L\omega_2(f;\sqrt{\gamma^*(n)(1+x)^2+\mu_n^2(p,q,x)})+\omega\left(f;\frac{qp^{n-1}+\alpha}{[n]_{p,q}+\beta}\right).$$
\end{theorem}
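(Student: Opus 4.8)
The plan is to run the standard $K$-functional argument built on the auxiliary operator $\hat{B}_{n,\alpha,\beta}^{p,q}$ introduced in Lemma \ref{lemma3}. First I would record two elementary facts about this auxiliary operator. It reproduces constants, $\hat{B}_{n,\alpha,\beta}^{p,q}(1;x)=1$, and from its definition \eqref{operator3} each of its three pieces is dominated in sup-norm, so that $|\hat{B}_{n,\alpha,\beta}^{p,q}(h;x)|\le 3\|h\|_{C_B}$ for every $h\in C_B[0,\infty)$. Rearranging \eqref{operator3} then yields the decomposition
\[
\mathcal{B}_{n,\alpha,\beta}^{p,q}(f;x)-f(x)=\big[\hat{B}_{n,\alpha,\beta}^{p,q}(f;x)-f(x)\big]+\big[f(\mathcal{B}_{n,\alpha,\beta}^{p,q}(t;x))-f(x)\big],
\]
and the two terms in the asserted bound will arise, respectively, from these two brackets.

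For the first bracket I would introduce an arbitrary $g\in W^2$ and split $f=(f-g)+g$. By linearity, $\hat{B}_{n,\alpha,\beta}^{p,q}(f;x)-f(x)=\big[\hat{B}_{n,\alpha,\beta}^{p,q}(f-g;x)-(f-g)(x)\big]+\big[\hat{B}_{n,\alpha,\beta}^{p,q}(g;x)-g(x)\big]$. The first summand is at most $4\|f-g\|_{C_B}$ by the boundedness noted above, while the second is controlled by Lemma \ref{lemma3}, namely $\|g''\|_{C_B}\big(\gamma^*(n)(1+x)^2+\mu_n^2(p,q,x)\big)$. Abbreviating $\delta^2:=\gamma^*(n)(1+x)^2+\mu_n^2(p,q,x)$ and taking the infimum over $g\in W^2$ turns $4\|f-g\|_{C_B}+\delta^2\|g''\|_{C_B}$ into $4\,\mathcal{K}_2(f,\delta^2/4)$.

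Next I would apply inequality \eqref{def1}, i.e.\ $\mathcal{K}_2(f,\eta)\le C\,\omega_2(f,\sqrt{\eta})$, to rewrite $4\,\mathcal{K}_2(f,\delta^2/4)$ as a constant multiple of $\omega_2(f,\delta/2)\le\omega_2(f,\delta)$, absorbing all numerical factors into a single constant $L$; this produces exactly the $\omega_2$ term with argument $\sqrt{\gamma^*(n)(1+x)^2+\mu_n^2(p,q,x)}$. For the second bracket I would use the definition of the ordinary modulus of continuity together with Lemma \ref{lemma2}(ii): writing $y=\mathcal{B}_{n,\alpha,\beta}^{p,q}(t;x)$, one has $|f(y)-f(x)|\le\omega(f,|y-x|)$ with $y-x=\dfrac{qp^{n-1}+\alpha-\beta x}{[n]_{p,q}+\beta}$, which contributes the stated $\omega\big(f,(qp^{n-1}+\alpha)/([n]_{p,q}+\beta)\big)$ term. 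Adding the two estimates gives the claim.

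I expect the only genuinely delicate points to be the bookkeeping of the numerical constants across the $K$-functional step, so that all factors collapse cleanly into the single constant $L$, and the identification of the displacement $|y-x|$ with the argument of the final modulus term; everything else is a direct assembly of Lemmas \ref{lemma2} and \ref{lemma3}.
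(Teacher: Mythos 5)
Your proposal is correct and takes essentially the same route as the paper's own proof: the same auxiliary operator $\hat{B}_{n,\alpha,\beta}^{p,q}$ from (\ref{operator3}), the same splitting $f=(f-g)+g$ with the $4\|f-g\|_{C_B}$ bound and Lemma~\ref{lemma3}, the same passage to $\mathcal{K}_2$ and then to $\omega_2$ via (\ref{def1}), and the same handling of the term $f(\mathcal{B}_{n,\alpha,\beta}^{p,q}(t;x))-f(x)$. If anything you are slightly more careful than the paper (the $\delta^2/4$ bookkeeping inside the $K$-functional, and writing out $y-x=(qp^{n-1}+\alpha-\beta x)/([n]_{p,q}+\beta)$); note, however, that neither you nor the paper justifies replacing $|y-x|$ by $(qp^{n-1}+\alpha)/([n]_{p,q}+\beta)$ in the final modulus term, which is only valid when $\beta x$ is small --- a gap inherited from, not worse than, the original.
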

\begin{proof}
Using \ref{operator3} we can obtain for any $g\in C_B^2[0,\infty)$
\begin{eqnarray*}
&&|\mathcal{B}_{n,\alpha,\beta}^{p,q}(f;x)-f(x)|\leq |\mathcal{\hat{B}}_{n,\alpha,\beta}^{p,q}(f;x)-f(x)|+|f(x)-f(\mathcal{B}_{n,\alpha,\beta}^{p,q}(t;x))|
\\&&\hspace{3 cm} \leq |\mathcal{\hat{B}}_{n,\alpha,\beta}^{p,q}(f-g;x)-(f-g)(x)|+|f(x)-f(\mathcal{B}_{n,\alpha,\beta}^{p,q}(t;x))|+|\mathcal{\hat{B}}_{n,\alpha,\beta}^{p,q}(g;x)-g(x)|
\end{eqnarray*}
Using (\ref{operator2}),(\ref{operator3}) and lemma \ref{lemma2}, we have
$$\|\mathcal{\hat{B}}_{n,\alpha,\beta}^{p,q}(f;x)-f(x)\|\leq 4 \|f\|_{C_B}.$$
Now by Lemma \ref{lemma3}, we get
\begin{eqnarray*}
&&|\mathcal{B}_{n,\alpha,\beta}^{p,q}(f;x)-f(x)|\leq \|f-g\|_{C_B}+\left|f(x)-f\left(\frac{[n]_{p,q}}{([n]_{p,q}+\beta)}x+\frac{qp^{n-1}+\alpha}{[n]_{p,q}+\beta}\right)\right|\\&&\hspace{4 cm}+\|g^{''}\|_{C_B}(\gamma^*(n)(1+x)^2+\mu_n^2(p,q,x))
\end{eqnarray*}
Taking infimum on right hand side over all $g\in C_B^2[0,\infty)$ and using \ref{def1},
\begin{eqnarray*}
&&|\mathcal{B}_{n,\alpha,\beta}^{p,q}(f;x)-f(x)|\leq 4\mathcal{K}_2(f;(\gamma^*(n)(1+x)^2+\mu_n^2(p,q,x)))+\omega\left(f;\frac{qp^{n-1}+\alpha}{[n]_{p,q}+\beta}\right)
\\&&\hspace{3 cm}\leq 4C\omega_2 (f;\sqrt{\gamma^*(n)(1+x)^2+\mu_n^2(p,q,x)})+\omega\left(f;\frac{qp^{n-1}+\alpha}{[n]_{p,q}+\beta}\right).
\end{eqnarray*}
Taking $L=4C>0$, we get the desired result.
\end{proof}
\begin{theorem}
Let $f\in C_2[0,\infty)$, $p_n, q_n\in (0,1)$ such that $0<q_n<p_n\leq 1$ and $\omega_{a+1}(f,\delta)$ be its modulus of continuity on the finite interval $[0, a + 1] \subset [0,\infty)$, where $a>0$. Then, for every $n>3$,
$$|\mathcal{B}_{n,\alpha,\beta}^{p,q}(f;x)-f(x)|\leq 4M_f (1+a^2)(1+x)\sqrt{a^*(n)}+2\omega_{a+1}(f,(1+x)\sqrt{a^*(n)})$$ holds.
\end{theorem}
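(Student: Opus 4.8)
The plan is to reduce the estimate to a single pointwise inequality for $|f(t)-f(x)|$ and then push it through the positive linear operator $\mathcal{B}_{n,\alpha,\beta}^{p,q}$. Since $\mathcal{B}_{n,\alpha,\beta}^{p,q}(1;x)=1$ by Lemma \ref{lemma2}, positivity and linearity give the starting bound $|\mathcal{B}_{n,\alpha,\beta}^{p,q}(f;x)-f(x)|\le \mathcal{B}_{n,\alpha,\beta}^{p,q}(|f(t)-f(x)|;x)$. Fix $x\in[0,a]$ and recall that $f\in C_2[0,\infty)$ means $|f(u)|\le M_f(1+u^2)$ for all $u\ge 0$.

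First I would split the range of $t$ at the point $a+1$. For $t\in[0,a+1]$ both arguments lie in $[0,a+1]$, so the definition of the local modulus gives $|f(t)-f(x)|\le\omega_{a+1}(f,|t-x|)\le\bigl(1+\tfrac{|t-x|}{\delta}\bigr)\omega_{a+1}(f,\delta)$ for every $\delta>0$. For $t>a+1$ we have $t-x>1$, hence $(t-x)^2>1$; starting from $|f(t)-f(x)|\le M_f(2+t^2+x^2)$ and using $x\le a$ together with $t^2\le 2(t-x)^2+2a^2$, a short computation yields $|f(t)-f(x)|\le 4M_f(1+a^2)(t-x)^2$. Combining the two cases produces the master inequality
\[
|f(t)-f(x)|\le 4M_f(1+a^2)(t-x)^2+\Bigl(1+\tfrac{|t-x|}{\delta}\Bigr)\omega_{a+1}(f,\delta),
\]
valid for all $t\ge 0$ and all $\delta>0$. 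I expect this pointwise bound --- in particular squeezing out the clean constant $4M_f(1+a^2)$ in the unbounded regime $t>a+1$ using only quadratic growth --- to be the main obstacle; everything afterwards is routine.

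Next I would apply $\mathcal{B}_{n,\alpha,\beta}^{p,q}(\cdot;x)$ to the master inequality. Linearity, positivity, and $\mathcal{B}_{n,\alpha,\beta}^{p,q}(1;x)=1$ give
\[
|\mathcal{B}_{n,\alpha,\beta}^{p,q}(f;x)-f(x)|\le 4M_f(1+a^2)\,\mathcal{B}_{n,\alpha,\beta}^{p,q}((t-x)^2;x)+\omega_{a+1}(f,\delta)\Bigl(1+\tfrac{1}{\delta}\,\mathcal{B}_{n,\alpha,\beta}^{p,q}(|t-x|;x)\Bigr).
\]
For the first-order term I would use the Cauchy--Schwarz inequality for positive linear functionals, $\mathcal{B}_{n,\alpha,\beta}^{p,q}(|t-x|;x)\le\sqrt{\mathcal{B}_{n,\alpha,\beta}^{p,q}((t-x)^2;x)}$. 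The second central moment was already controlled in Section 2 as $\mathcal{B}_{n,\alpha,\beta}^{p,q}((t-x)^2;x)\le a^*(n)(1+x)^2$ (the constant $\gamma^*(n)$ there, written $a^*(n)$ here), whence $\mathcal{B}_{n,\alpha,\beta}^{p,q}(|t-x|;x)\le(1+x)\sqrt{a^*(n)}$.

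Finally I would make the balancing choice $\delta=\delta_n(x)=(1+x)\sqrt{a^*(n)}$. With it the bracket $1+\tfrac1\delta\,\mathcal{B}_{n,\alpha,\beta}^{p,q}(|t-x|;x)$ collapses to at most $2$, producing the modulus term $2\,\omega_{a+1}(f,(1+x)\sqrt{a^*(n)})$, while the second-moment estimate turns the first term into $4M_f(1+a^2)\,a^*(n)(1+x)^2=4M_f(1+a^2)\delta_n^2(x)$. Since $a^*(n)\to 0$ as $n\to\infty$, for the relevant range of $n$ one has $\delta_n(x)=(1+x)\sqrt{a^*(n)}\le 1$ on $[0,a]$, so that $\delta_n^2(x)\le\delta_n(x)=(1+x)\sqrt{a^*(n)}$; bounding the first term this way yields exactly $4M_f(1+a^2)(1+x)\sqrt{a^*(n)}$, and collecting the two contributions gives the asserted inequality. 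The hypotheses $n>3$ and $p_n,q_n\in(0,1)$ enter only to keep $a^*(n)$ finite and small enough for this last reduction.
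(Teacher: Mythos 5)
Your proposal is correct and takes essentially the same route as the paper: the paper simply cites \cite{ibikli} for your ``master inequality'' and then, exactly as you do, applies Cauchy--Schwarz together with the moment bound $\mathcal{B}_{n,\alpha,\beta}^{p,q}((t-x)^2;x)\leq \gamma^*(n)(1+x)^2$ and the choice $\delta=(1+x)\sqrt{a^*(n)}$. You are in fact more complete at two points: you derive the cited pointwise inequality from scratch (with the restriction $x\in[0,a]$ made explicit, which the paper leaves unstated), and you spell out the final reduction $4M_f(1+a^2)a^*(n)(1+x)^2\leq 4M_f(1+a^2)(1+x)\sqrt{a^*(n)}$, valid only when $(1+x)\sqrt{a^*(n)}\leq 1$, a step the paper's proof silently skips when it stops at the term $4M_f(1+a^2)\gamma^*(n)(1+x)^2$ and declares the proof complete.
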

\begin{proof}
by \cite{ibikli}, $\omega_{a+1}(\cdot,\delta)$ has the property
$$|f(t)-f(x)|\leq 4 M_f(1+a^2)(t-x)^2+\left(1+\frac{|t-x|}{\delta}\right)\omega_{a+1}(f,\delta),~\delta>0$$
Applying Cauchy-Schwarz inequality and choosing $\delta=(1+x)\sqrt{a^*(n)}$, we have
\begin{eqnarray*}
&&|\mathcal{B}_{n,\alpha,\beta}^{p,q}(f;x)-f(x)|\leq 4 M_f(1+a^2)\mathcal{B}_{n,\alpha,\beta}^{p,q}((t-x)^2;x)+\omega_{a+1}(f,\delta)\left(1+\frac{1}{\delta}\sqrt{\mathcal{B}_{n,\alpha,\beta}^{p,q}((t-x)^2;x)}\right)\\&&
\hspace{2 cm}\leq 4 M_f(1+a^2)\gamma^*(n)(1+x)^2+\omega_{a+1}(f,\delta)\left(1+\frac{1}{\delta}\sqrt{\gamma^*(n)(1+x)^2}\right),
\end{eqnarray*}
which completes the proof.
\end{proof}
\section{Weighted Approximation}
Consider the weighted space functions defined as
\begin{enumerate}
\item[1.]$B_{x^2}[0,\infty)$ be the space of functions $f$ defined on $[0,\infty)$ satisfying $|f(x)|\leq M(1+x^2),M>0$.\\
\item[2.]$C_{x^2}[0,\infty)$ be the subspace of all continuous functions in $B_{x^2}[0,\infty)$.\\
\item[3.]$C^*_{x^2}[0,\infty)$ is the subspace of functions $f\in C_{x^2}[0,\infty)$ for which $\lim_{x\to \infty}\frac{f(x)}{1+x^2}$ is finite.
\end{enumerate}
Note that the space $B_{x^2}[0,\infty)$ is a normed linear space with the norm $\|f\|_{x^2}=\sup_{x\geq 0}\frac{|f(x)|}{1+x^2}$. In order to calculate rate of convergence consider the weighted modulus of continuity defined as
$$\Omega(f;\delta)=\sup_{x\geq 0, 0<h\leq \delta}\frac{|f(x+h)-f(x)|}{1+(x+h)^2} ~~~for~~f\in C^*_{x^2}[0,\infty). $$
\begin{lemma}\label{lemma4}
Weighted modulus of continuity has following properties as defined in \cite{lopez},
\begin{enumerate}
\item[1.] $\Omega(f;\delta)$ is a monotonic increasing function of $\delta$.
\item[2.] $\lim_{\delta\to 0^+}\Omega(f;\delta)=0.$
\item[3.] For any $\lambda \geq 0, ~\Omega(f;\lambda \delta)\leq (1+\lambda) \Omega(f;\delta).$
\end{enumerate}
\end{lemma}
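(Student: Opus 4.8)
The plan is to establish the three listed properties separately, since each draws on a different feature of the definition
$$\Omega(f;\delta)=\sup_{x\geq 0,\,0<h\leq \delta}\frac{|f(x+h)-f(x)|}{1+(x+h)^2}.$$
Property 1 (monotonicity) is immediate from the supremum: if $\delta_1\leq \delta_2$, then the index set $\{(x,h):x\geq 0,\,0<h\leq \delta_1\}$ is contained in $\{(x,h):x\geq 0,\,0<h\leq \delta_2\}$, so the supremum over the larger set dominates, giving $\Omega(f;\delta_1)\leq \Omega(f;\delta_2)$.

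For property 3 I would first prove the auxiliary inequality $\Omega(f;n\delta)\leq n\,\Omega(f;\delta)$ for every positive integer $n$. Fixing $x\geq 0$ and $0<h\leq n\delta$, I telescope the increment into $n$ equal steps of length $h/n\leq \delta$:
$$f(x+h)-f(x)=\sum_{k=1}^{n}\Big[f\big(x+\tfrac{kh}{n}\big)-f\big(x+\tfrac{(k-1)h}{n}\big)\Big].$$
Writing $x_k=x+\tfrac{(k-1)h}{n}\geq 0$, each summand has step $h/n\leq\delta$ and satisfies $x_k+\tfrac{h}{n}\leq x+h$, hence $1+(x_k+\tfrac{h}{n})^2\leq 1+(x+h)^2$; dividing by $1+(x+h)^2$ bounds each quotient by $\Omega(f;\delta)$ via the definition. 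Summing the $n$ terms and taking the supremum over $x$ and $h$ yields the auxiliary inequality. For arbitrary $\lambda\geq 0$ I then set $n=\lceil\lambda\rceil$, so that $\lambda\leq n\leq \lambda+1$, and combine with property 1 to obtain $\Omega(f;\lambda\delta)\leq \Omega(f;n\delta)\leq n\,\Omega(f;\delta)\leq (1+\lambda)\Omega(f;\delta)$.

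Property 2 is the step I expect to be the main obstacle, as it is the only one requiring genuine analysis rather than manipulation of the supremum. The key reduction is to the auxiliary function $g(x)=f(x)/(1+x^2)$: since $f\in C^*_{x^2}[0,\infty)$, $g$ is continuous on $[0,\infty)$ and admits a finite limit as $x\to\infty$, so $g$ is \emph{uniformly} continuous and bounded, say $|g|\leq G$. I would then use the algebraic identity
$$\frac{f(x+h)-f(x)}{1+(x+h)^2}=\big(g(x+h)-g(x)\big)+g(x)\,\frac{h(2x+h)}{1+(x+h)^2}.$$
The first bracket is controlled by the ordinary modulus of continuity $\omega(g;\delta)$, which tends to $0$ with $\delta$ by uniform continuity. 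For the second term I use $2x+h\leq 2(x+h)$ together with the elementary bound $2y/(1+y^2)\leq 1$ to get $\frac{h(2x+h)}{1+(x+h)^2}\leq h\leq \delta$, so that contribution is at most $G\delta$. Taking the supremum over $x\geq 0$ and $0<h\leq\delta$ gives $\Omega(f;\delta)\leq \omega(g;\delta)+G\delta$, and letting $\delta\to 0^+$ completes the argument. The delicate point is precisely the splitting of the weighted difference so that both the uniform continuity of $g$ and the decay of the weight correction can be exploited at once; once that identity is in hand the remaining estimates are routine.
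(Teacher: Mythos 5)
Your proof is correct, but there is no proof in the paper to compare it against: the authors state Lemma \ref{lemma4} as a list of known properties and simply refer to \cite{lopez} for them. Your argument therefore supplies, in self-contained form, what the paper imports from the literature, and all three steps check out: the nested-supremum argument for monotonicity; the telescoping estimate $\Omega(f;n\delta)\le n\,\Omega(f;\delta)$ combined with $n=\lceil\lambda\rceil$ and monotonicity for property 3 (only the degenerate case $\lambda=0$ needs a separate word, and it is trivial since then $n=\lceil\lambda\rceil=0$ is not a valid step count, but the claim reduces to monotonicity); and, for property 2, the identity $\frac{f(x+h)-f(x)}{1+(x+h)^2}=\bigl(g(x+h)-g(x)\bigr)+g(x)\,\frac{h(2x+h)}{1+(x+h)^2}$ with $g=f/(1+x^2)$, which is algebraically correct, together with the facts that $g$ is uniformly continuous and bounded (continuous on $[0,\infty)$ with a finite limit at infinity, by the definition of $C^*_{x^2}[0,\infty)$) and that $2(x+h)/(1+(x+h)^2)\le 1$, giving $\Omega(f;\delta)\le\omega(g;\delta)+G\delta\to 0$. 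One remark worth recording: your telescoping proof of property 3 with the clean constant $1+\lambda$ works precisely because this paper's modulus carries the weight at the right endpoint, $1+(x+h)^2$, so every intermediate point $x+kh/n$ has weight at most $1+(x+h)^2$; in the frequently used variant of the weighted modulus where the denominator is $(1+h^2)(1+x^2)$, the statement as written is not available and one only gets a weaker inequality with a constant like $2(1+\lambda)(1+\delta^2)$. So your argument is genuinely adapted to the definition actually given in this paper rather than to the form in the cited source, which is exactly what a self-contained verification should do.
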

\begin{theorem}
Consider $p=p_n$ and $q=q_n$ such that $0<q_n<p_n\leq 1$ and as $n\to \infty$, we get $p_n\to1, q_n\to1, p_n^n\to 1, q_n^n \to 1$. For any $f\in C_{x^2}^*[0,\infty)$, we have
$$\lim_{n\to \infty}\sup_{x \in [0,\infty)}\frac{\left|\mathcal{B}_{n,\alpha,\beta}^{p_n,q_n}(f;x)-f(x)\right|}{(1+x^2)^{1+\alpha}}=0.$$
\end{theorem}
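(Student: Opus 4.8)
The plan is to prove this qualitative weighted convergence by splitting the supremum over $[0,\infty)$ into a compact part and a tail: the compact part is reduced to the classical (unweighted) Korovkin theorem, while the tail is killed by the extra factor $(1+x^2)^{\alpha}$ that the denominator carries beyond the natural weight $1+x^2$. First I would record the limiting behaviour of the moments. Writing $[n]_{p_n,q_n}=\sum_{j=0}^{n-1}p_n^{\,n-1-j}q_n^{\,j}$ as a sum of $n$ positive terms each tending to $1$, the hypotheses $p_n\to1$ and $q_n\to1$ force $[n]_{p_n,q_n}\to\infty$. Substituting this, together with $p_n^n\to1$, $q_n^n\to1$ and $[2]_{p_n,q_n}=p_n+q_n\to2$, into the explicit formulas of Lemma \ref{lemma2}, one checks termwise that the coefficient of $x^2$ in $\mathcal{B}_{n,\alpha,\beta}^{p_n,q_n}(t^2;x)$ tends to $1$ while the coefficients of $x$ and the constant term tend to $0$; likewise $\mathcal{B}_{n,\alpha,\beta}^{p_n,q_n}(t;x)\to x$ and $\mathcal{B}_{n,\alpha,\beta}^{p_n,q_n}(1;x)=1$.

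Consequently, on every finite interval $[0,x_0]$ the test-function convergence $\mathcal{B}_{n,\alpha,\beta}^{p_n,q_n}(t^\nu;\cdot)\to x^\nu$ for $\nu=0,1,2$ is uniform, so the classical Korovkin theorem gives $\|\mathcal{B}_{n,\alpha,\beta}^{p_n,q_n}(f;\cdot)-f\|_{C[0,x_0]}\to0$ for each fixed $x_0$. Now fix $\varepsilon>0$ and write
$$\sup_{x\ge0}\frac{\big|\mathcal{B}_{n,\alpha,\beta}^{p_n,q_n}(f;x)-f(x)\big|}{(1+x^2)^{1+\alpha}}\le \sup_{0\le x\le x_0}\frac{\big|\mathcal{B}_{n,\alpha,\beta}^{p_n,q_n}(f;x)-f(x)\big|}{(1+x^2)^{1+\alpha}}+\sup_{x> x_0}\frac{\big|\mathcal{B}_{n,\alpha,\beta}^{p_n,q_n}(f;x)-f(x)\big|}{(1+x^2)^{1+\alpha}}.$$

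For the tail I would bound the numerator by its natural growth. Since $f\in C^*_{x^2}[0,\infty)$ yields $|f(x)|\le \|f\|_{x^2}(1+x^2)$, and positivity with Lemma \ref{lemma2} gives $\big|\mathcal{B}_{n,\alpha,\beta}^{p_n,q_n}(f;x)\big|\le \|f\|_{x^2}\,\mathcal{B}_{n,\alpha,\beta}^{p_n,q_n}(1+t^2;x)\le C(1+x^2)$ with $C$ independent of $n$ for all large $n$ (the $x^2$-coefficient of the second moment being bounded because it converges to $1$), the tail is at most $\dfrac{C'}{(1+x_0^2)^{\alpha}}$ with $C'=C+\|f\|_{x^2}$, which is $<\varepsilon/2$ once $x_0$ is chosen large, uniformly in $n$. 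With this $x_0$ now fixed, the compact part obeys $(1+x^2)^{1+\alpha}\ge1$, hence is $\le\|\mathcal{B}_{n,\alpha,\beta}^{p_n,q_n}(f;\cdot)-f\|_{C[0,x_0]}$, which is $<\varepsilon/2$ for all large $n$ by the previous paragraph. Adding the two bounds gives $\sup_{x\ge0}(\cdots)<\varepsilon$ for all large $n$, which is the claim.

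The step I expect to be the main obstacle is the uniform-in-$n$ growth bound used in the tail: one must be sure that the $x^2$-coefficient of $\mathcal{B}_{n,\alpha,\beta}^{p_n,q_n}(t^2;x)$ stays bounded along the sequence $(p_n,q_n)$, which is exactly what $p_n^n\to1$ and $q_n^n\to1$ secure, since they make $p_n^{\,n+2}/\big(q_n[n]_{p_n,q_n}\big)\to0$. Without such control the constant $C$ could blow up with $n$ and the tail estimate would be uniform no longer. The compact part, by contrast, is routine once the moment limits of Lemma \ref{lemma2} are in hand.
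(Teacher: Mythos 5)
Your proposal is correct and follows essentially the same route as the paper's proof: split the supremum at a threshold $x_0$, handle $[0,x_0]$ by the classical Korovkin theorem via the moment formulas of Lemma \ref{lemma2}, and kill the tail $x\geq x_0$ using the extra factor $(1+x^2)^{\alpha}$ together with the growth bound $|\mathcal{B}_{n,\alpha,\beta}^{p_n,q_n}(f;x)|\leq \|f\|_{x^2}\,\mathcal{B}_{n,\alpha,\beta}^{p_n,q_n}(1+t^2;x)$. If anything, your tail estimate is slightly more careful than the paper's, which treats $\mathcal{B}_{n,\alpha,\beta}^{p_n,q_n}(1+t^2;x)$ as exactly equal to $1+x^2$, whereas only the uniform-in-$n$ bound $\leq C(1+x^2)$ that you establish (using $p_n^n\to1$, $q_n^n\to1$ to control the $x^2$-coefficient) is actually available.
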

\begin{proof}
\begin{eqnarray}\label{d1}
&&\sup_{x \in [0,\infty)}\frac{\left|\mathcal{B}_{n,\alpha,\beta}^{p_n,q_n}(f;x)-f(x)\right|}{(1+x^2)^{1+\alpha}} \leq \sup_{x\leq x_0}\frac{\left|\mathcal{B}_{n,\alpha,\beta}^{p_n,q_n}(f;x)-f(x)\right|}{(1+x^2)^{1+\alpha}}+\sup_{x \geq x_0}\frac{\left|\mathcal{B}_{n,\alpha,\beta}^{p_n,q_n}(f;x)-f(x)\right|}{(1+x^2)^{1+\alpha}}\nonumber
\\&& \hspace{3 cm}\leq  \|\mathcal{B}_{n,\alpha,\beta}^{p_n,q_n}(f)-f\|_{C[0,x_0]}+\|f\|_{x^2}\sup_{x \geq x_0}\frac{\left|\mathcal{B}_{n,\alpha,\beta}^{p_n,q_n}(1+t^2;x)\right|}{(1+x^2)^{1+\alpha}}+\sup_{x \geq x_0}\frac{|f(x)|}{(1+x^2)^{1+\alpha}}.
\end{eqnarray}
where $\|\cdot\|_{C[0,x_0]}$ is the uniform norm on $[0,x_0]$.\\ We have $\sup_{x\geq x_0}\frac{|f(x)|}{(1+x^2)^{1+\alpha}}\leq \frac{\|f\|_{x^2}}{(1+x^2_0)^{1+\alpha}} $ since $|f(x)|\leq M(1+x^2)$. For arbitrary $\epsilon$ we can take large value of $x_0$  such that
\begin{equation}\label{eq4}
\frac{\|f\|_{x^2}}{(1+x^2_0)^{1+\alpha}}< \frac{\epsilon}{3}.
\end{equation}
Using lemma \ref{lemma2} we get
\begin{equation}\label{eq5}
\|f\|_{x^2}\frac{\left|\mathcal{B}_{n,\alpha,\beta}^{p_n,q_n}(1+t^2;x) \right|}{(1+x^2)^{1+\alpha}}=\frac{(1+x^2)}{(1+x^2)^{1+\alpha}}\|f\|_{x^2}\leq \frac{\|f\|_{x^2}}{(1+x^2)^{\alpha}}\leq \frac{\|f\|_{x^2}}{(1+x^2_0)^{\alpha}}<\frac{\epsilon}{3}.
\end{equation}
Also by Korovkin's theorem we have
\begin{equation}\label{eq6}
\|\mathcal{B}_{n,\alpha,\beta}^{p_n,q_n}(f)-f\|_{C[0,x_0]}\leq \frac{\epsilon}{3}.
\end{equation}
Hence, using (\ref{eq4})-(\ref{eq6}) in the inequality (\ref{d1}), we get the desired result.
\end{proof}
\begin{theorem}
If $f\in C^*_{x^2}[0,\infty)$, then as $n\to \infty$ we have
\begin{eqnarray*}
|\mathcal{B}_{n,\alpha,\beta}^{p_n,q_n}(f;x)-f(x)|&\leq & (1+x^2)(\nu_2+\nu_4\sqrt{x^2+x+1})~\Omega(f;\delta)
\\&\leq& \nu (1+x^{2+\lambda})~\Omega(f;\delta),
\end{eqnarray*}
where $\lambda\geq 1, \delta_n=max{\alpha_n,\beta_n,\gamma_n}$ and $\nu$ is a positive constant independent of $f$ and $n$.
\end{theorem}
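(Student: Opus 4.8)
The plan is to exploit that $\mathcal{B}_{n,\alpha,\beta}^{p_n,q_n}$ is a positive linear operator reproducing constants (Lemma \ref{lemma2}(i)), so that, writing $\mathcal{B}$ for $\mathcal{B}_{n,\alpha,\beta}^{p_n,q_n}$ throughout,
$$|\mathcal{B}(f;x)-f(x)|=|\mathcal{B}(f(t)-f(x);x)|\le \mathcal{B}(|f(t)-f(x)|;x),$$
and then to feed in a pointwise bound on $|f(t)-f(x)|$ coming from the weighted modulus $\Omega$. First I would record the fundamental estimate produced by the three properties collected in Lemma \ref{lemma4}. For $|t-x|\le\delta$ the definition of $\Omega$ gives $|f(t)-f(x)|\le(1+(x+|t-x|)^2)\Omega(f;\delta)$ directly, while for $|t-x|>\delta$ the quasi-subadditivity property (3) with $\lambda=|t-x|/\delta$ upgrades this by the factor $1+|t-x|/\delta$. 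Using the AM--GM step $1+(x+|t-x|)^2\le 2(1+x^2)(1+(t-x)^2)$, both cases combine into
$$|f(t)-f(x)|\le 2(1+x^2)\Big(1+\tfrac{|t-x|}{\delta}\Big)\big(1+(t-x)^2\big)\,\Omega(f;\delta),$$
valid for all $t,x\ge 0$.

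Applying the operator and expanding the product, the task reduces to estimating the four moments $\mathcal{B}(1;x)=1$, $\mathcal{B}((t-x)^2;x)$, $\tfrac1\delta\mathcal{B}(|t-x|;x)$ and $\tfrac1\delta\mathcal{B}(|t-x|^3;x)$. The two absolute-value moments I would control by the Cauchy--Schwarz inequality for positive linear operators, writing $\mathcal{B}(|t-x|;x)\le\sqrt{\mathcal{B}((t-x)^2;x)}$ and $\mathcal{B}(|t-x|^3;x)\le\sqrt{\mathcal{B}((t-x)^2;x)}\,\sqrt{\mathcal{B}((t-x)^4;x)}$. The second central moment is already in hand from Section 2, namely $\mathcal{B}((t-x)^2;x)=\gamma_1(n)x^2+\gamma_2(n)x+\gamma_3(n)\le\gamma^*(n)(x^2+x+1)$, so its square root contributes exactly the factor $\sqrt{x^2+x+1}$ appearing in the statement. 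Choosing $\delta=\delta_n$ so that $\gamma^*(n)\asymp\delta_n^2$ (this is the role of $\delta_n=\max\{\alpha_n,\beta_n,\gamma_n\}$) makes $\tfrac1{\delta_n}\sqrt{\mathcal{B}((t-x)^2;x)}\le\nu_4\sqrt{x^2+x+1}$ with $\nu_4$ independent of $n$, while the constant and $(t-x)^2$ contributions collect into the $\nu_2$ term.

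Collecting everything under the common factor $(1+x^2)$ yields the first displayed inequality. The second inequality then follows from $\sqrt{x^2+x+1}\le 1+x$, which turns $(1+x^2)(\nu_2+\nu_4\sqrt{x^2+x+1})$ into a polynomial whose leading term is of order $x^3$, hence dominated by $\nu(1+x^{2+\lambda})$ for any $\lambda\ge 1$ and a suitable constant $\nu$.

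The main obstacle is the fourth central moment $\mathcal{B}((t-x)^4;x)$ required by the $|t-x|^3$ term: unlike the second moment it is not computed in the excerpt, and obtaining it needs $\mathcal{B}_n^{p,q}(t^3;x)$ and $\mathcal{B}_n^{p,q}(t^4;x)$, i.e. the third and fourth raw moments of the base $(p,q)$-Baskakov--Durrmeyer operator, whose $(p,q)$-integral evaluation is heavy. I would argue that as $n\to\infty$ this fourth moment is $O\big(\gamma^*(n)^2\big)$ times a polynomial in $x$, so that after division by $\delta_n$ its contribution is of lower order and is absorbed into $\nu_2,\nu_4$; the delicate point throughout is to check that the emerging constants are genuinely independent of both $f$ and $n$, which hinges on the precise asymptotic matching $\gamma^*(n)\asymp\delta_n^2\to 0$.
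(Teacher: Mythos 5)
Your overall strategy coincides with the paper's: both start from the Lemma \ref{lemma4} estimate $|f(t)-f(x)|\le\left(1+(x+|t-x|)^2\right)\left(1+\frac{|t-x|}{\delta}\right)\Omega(f;\delta)$, apply the operator, control the cross term by Cauchy--Schwarz together with $\mathcal{B}_{n,\alpha,\beta}^{p_n,q_n}((t-x)^2;x)\le\gamma^*(n)(1+x)^2$, and choose $\delta_n=\sqrt{\gamma^*(n)}$. The difference is in how the weight is handled, and it is not innocuous. The paper bounds $x+|t-x|\le 2x+t$ and keeps the factor $1+(2x+t)^2$ intact inside the operator, so Cauchy--Schwarz is applied once to the product $(1+(2x+t)^2)\frac{|t-x|}{\delta}$; the only fourth-order information needed is then $\mathcal{B}\left((1+(2x+t)^2)^2;x\right)\le\nu_4^2(1+x^2)^2$, i.e.\ uniform boundedness of $\mathcal{B}(1+t^4;x)/(1+x^4)$ for large $n$, with no decay rate in $n$ required. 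Your AM--GM factorization $1+(x+|t-x|)^2\le 2(1+x^2)(1+(t-x)^2)$ instead expands the bound into central moments of orders $1$, $2$ and $3$, and this has two costs. First, the term $2(1+x^2)\,\mathcal{B}((t-x)^2;x)\le 2\gamma^*(n)(1+x^2)(1+x)^2$ grows like $x^4$, and the term $2(1+x^2)(1+x)\sqrt{\mathcal{B}((t-x)^4;x)}$ grows like $x^5$ even granting your fourth-moment claim; so your route cannot recover the first displayed inequality, whose right-hand side grows like $x^3$, and it yields $\nu(1+x^{2+\lambda})$ only for $\lambda\ge 3$ rather than for every $\lambda\ge 1$. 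Second, your splitting makes a rate for the fourth central moment, $\mathcal{B}((t-x)^4;x)=O(\gamma^*(n)^2)\cdot\mathrm{poly}(x)$, look indispensable, when in fact mere boundedness $\mathcal{B}((t-x)^4;x)\le C\,\mathrm{poly}(x)$ would suffice for your own argument: the factor $\frac{1}{\delta_n}$ is already absorbed by $\sqrt{\mathcal{B}((t-x)^2;x)}\le\sqrt{\gamma^*(n)}(1+x)$, so asking for decay of the fourth moment is harder than necessary.

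To be fair, the obstacle you flag is real and is not resolved in the paper either: the paper merely asserts ``in the same way we can get'' $\mathcal{B}(1+t^4;x)/(1+x^4)\le 1+\nu_3$ without ever computing third or fourth moments (Lemma \ref{lemma1} stops at $t^2$), so your honesty points at a genuine gap in the published argument itself. But as a proof of the theorem as stated, your proposal falls short on two counts: the weaker growth exponent produced by the early factorization of $(1+x^2)$, and an unnecessary (and unproved) decay-rate requirement on the fourth central moment. To match the statement, keep $1+(2x+t)^2$ inside the operator as the paper does, and supply the missing uniform bound on the fourth raw moment.
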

\begin{proof}
By the definition of weighted modulus of continuity and Lemma \ref{lemma4}, we get
\begin{eqnarray*}
|f(t)-f(x)|&\leq & \left(1+(x+|t-x|)^2\right)\left(1+\frac{|t-x|}{\delta}\right)~\Omega(f;\delta)
\\&\leq & \left(1+(2x+t)^2\right)\left(1+\frac{|t-x|}{\delta}\right)~\Omega(f;\delta).
\end{eqnarray*}
Operating with \ref{operator2} both sides of inequality and using Cauchy-Schwarz inequality we get
\begin{eqnarray*}
|\mathcal{B}_{n,\alpha,\beta}^{p_n,q_n}(f;x)-f(x)|&\leq &\left(\mathcal{B}_{n,\alpha,\beta}^{p_n,q_n}\left(1+(2x+t)^2;x\right)+\mathcal{B}_{n,\alpha,\beta}^{p_n,q_n}\left(\left(1+(2x+t)^2\right)\frac{|t-x|}{\delta};x\right)\right)~\Omega(f;\delta)
\\&\leq & \Biggl(\mathcal{B}_{n,\alpha,\beta}^{p_n,q_n}\left(1+(2x+t)^2;x\right)+\frac{1}{\delta_n}\sqrt{\mathcal{B}_{n,\alpha,\beta}^{p_n,q_n}\left(\left(1+(2x+t)^2\right)^2;x\right)}\\&& \hspace{.5 cm}\times \sqrt{\mathcal{B}_{n,\alpha,\beta}^{p_n,q_n}\left((t-x)^2;x\right)}\Biggl)~\Omega(f;\delta).
\end{eqnarray*}
For large values of $n$, from Lemma \ref{lemma2}, we can obtain
\begin{equation}\label{eq7}
\frac{\mathcal{B}_{n,\alpha,\beta}^{p_n,q_n}(1+t^2;x)}{1+x^2}\leq 1+\nu_1,
\end{equation}
where $\nu_1$ is a positive constant. From \ref{eq7} we can get
\begin{equation}
\mathcal{B}_{n,\alpha,\beta}^{p_n,q_n}(1+(2x+t)^2;x)\leq \nu_2 (1+x^2),~~\nu_2>0,
\end{equation}
In the same way we can get, $\frac{\mathcal{B}_{n,\alpha,\beta}^{p_n,q_n}(1+t^4;x)}{1+x^4}\leq 1+\nu_3,~ \nu_3>0$ and hence for large values of $n$, $$\sqrt{\mathcal{B}_{n,\alpha,\beta}^{p_n,q_n}((1+(2x+t)^2)^2;x)}\leq \nu_4 (1+x^2),~~\nu_4>0.$$
Hence, we have
\begin{equation*}
|\mathcal{B}_{n,\alpha,\beta}^{p_n,q_n}(f;x)-f(x)| \leq (1+x^2)\left(\nu_2+\frac{1}{\delta_n}\nu_4 (1+x)\sqrt{\gamma^*(n)}\right)~\Omega(f;\delta).
\end{equation*}
Let $\delta_n=\sqrt{\gamma^*(n)}$
\begin{eqnarray*}
&&|\mathcal{B}_{n,\alpha,\beta}^{p_n,q_n}(f;x)-f(x)| \leq (1+x^2)(\nu_2+\nu_4 (1+x))~\Omega(f;\sqrt{\gamma^*(n)})
\\&& \hspace{3 cm} \leq \nu (1+x^{2+\lambda})~\Omega(f;\sqrt{\gamma^*(n)}).
\end{eqnarray*}
where $\nu:=\nu_2+\nu_4.$
\end{proof}
\section{Graphics}
We present comparisons and illustrative graphics for the convergence of $(p,q)$ Baskakov-Durrmeyer-Stancu Operators to the function $f(x)=\cos x^2$, for different values of parameters $p=0.9,q=0.8,n=100,98,\alpha=0.1,\beta=0.5$.
\begin{figure}[h!]
\begin{center}
\includegraphics[scale=0.8]{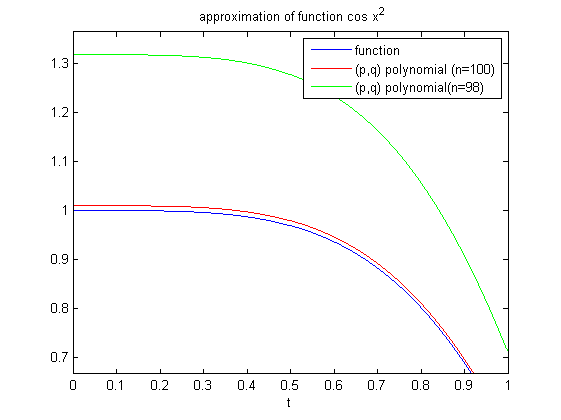}
\end{center}
\end{figure}
\begin{flushleft}
\textbf{Conflict of Interest:} The authors declare that they have no competing interests regarding the publication of this manuscript.
\end{flushleft}
\begin{flushleft}
\textbf{References}
\end{flushleft}

\end{document}